\theoremstyle{plain}
\newtheorem{theorem}{Theorem}[section]
\theoremstyle{definition}
\newtheorem{lm}[theorem]{Lemma}
\newtheorem{prop}[theorem]{Proposition}
\newtheorem{cor}[theorem]{Corollary}
\newtheorem{conj}[theorem]{Conjecture}
\newtheorem{question}[theorem]{Question}
\theoremstyle{remark}
\newtheorem{rmk}{Remark}
\DeclareMathOperator{\dP}{dP}
\DeclareMathOperator{\Int}{Int}
\DeclareMathOperator{\Mob}{Mob}
\newcommand{\Cr}{\operatorname{Cr}}
\newcommand{\Sing}{\operatorname{Sing}}
\newcommand{\Aut}{\operatorname{Aut}}
\newcommand{\Pic}{\operatorname{Pic}}
\newcommand{\psl}{\operatorname{PSL}_2(\mathbb{F}_7)}
\newcommand{\sll}{\operatorname{SL}_2(\mathbb{F}_8)}
\newcommand{\psp}{\operatorname{PSp}_4(\mathbb{F}_3)}
\def \O {\mathcal{O}}
\def \X {\mathcal{X}}
\def \PPP{\mathcal{P}}
\def \C {\mathbb{C}}
\def \Z {\mathbb{Z}}
\def \A {\mathbb{A}}
\newcommand{\PP}{\mathbb{P}}
\renewcommand{\H}{\text{H}}
\newcommand\qt{{\slash\kern-0.65ex\slash}}
\title[Klein simple group in Cremona group]{On conjugacy classes of the Klein\\simple group in Cremona group}
\author{Hamid Ahmadinezhad}
\keywords{Birational Geometry; Cremona Group.\vspace{0.2cm} }
\subjclass[2010]{14E07}
\begin{document}

\begin{abstract}We consider countably many three dimensional $\psl$-del Pezzo surface fibrations over $\PP^1$. Conjecturally they are all irrational except two families, one of which is the product of a del Pezzo surface with $\PP^1$. We show that the other model is $\psl$-equivariantly birational to $\PP^2\times\PP^1$. Based on a result of Prokhorov, we show that they are non-conjugate as subgroups of the Cremona group $\Cr_3(\C)$. \end{abstract}

\maketitle


\section{Introduction}

The Klein simple group $\psl$ appears in various branches of mathematics.\begin{enumerate}[-]
\item{\bf In Group Theory}: it is the maximal subgroup of the Mathieu group and it is the second smallest non-abelian simple group.
\item{\bf In Hyperbolic Geometry}: it is the automorphism group of the Klein quartic $$\{x^3y+y^3z+z^3x=0\}\subset\PP^2.$$
\item{\bf In Finite Geometry}: it is the symmetry group of the Fano plane.
\item{\bf In Algebraic Geometry}: it is one of the three, respectively six, finite simple non-abelian groups that admit an embedding in the two dimensional, respectively three dimensional, Cremona group. 
\end{enumerate}

The latter motivates this paper. For simplicity we denote the Cremona group of rank~$n$, the group of birational automorphisms of the complex projective space $\PP^n$, by $\Cr_n(\C)$. Suppose $X$ is an $n$-dimensional rational variety, that is a variety birational to $\PP^n$, and let $G$ be a subgroup of $\Aut(X)$. Then the rational map $\varphi\colon X\dasharrow\PP^n$ defines an embedding of $G$ into $\Cr_n(\C)$. If $G$ acts on two rational varieties $X$ and $Y$ such that there does not exist a $G$-equivariant birational map $\psi\colon X\dashrightarrow Y$ then the two embeddings of $G$ in $\Cr_n(\C)$ cannot be conjugate.

The following question of Serre brought special attention to subgroups of $\Cr_3(\C)$:

\begin{question}[Serre\cite{serre}]  Does there exist a finite group which cannot be embedded in $\Cr_3(\C)$?
\end{question}

In \cite{prokh}, Prokhorov gave a negative answer to this by showing that there are only six finite simple non-abelian groups that admit an embedding into $\Cr_3(\C)$, namely \[\mathbb{A}_5, \mathbb{A}_6, \mathbb{A}_7, \psl, \sll \text{ and }\psp. \]

It is a rule of thumb that {\it the larger the group, the fewer non-conjugate embeddings it admits}. For instance all non-conjugate embeddings of $\mathbb{A}_7$, $\sll$ and $\psp$ are known \cite{beauv, five-emb,prokh}. On the other hand $\mathbb{A}_6$ admits at least five non-conjugate embeddings in $\Cr_3(\C)$, see \cite{five-emb}.

\subsection*{Embeddings of $\psl$ in $\Cr_2(\C)$} 

It is known that $\psl$ admits two embeddings into $\Cr_2(\C)$. One embedding is obtained by the direct action of $\psl$ on $\PP^2$ that leaves the Klein quartic curve invariant. The other embedding is obtained by action of $\psl$ on a double cover of $\PP^2$ branched over the Klein quartic. This double cover is known as the del Pezzo surface of degree $2$, which is the blow up of $\PP^2$ at $7$ points in general position (denoted by $\dP_2$). Both these varieties turn out to be $\psl$-birationally rigid, in particular $\PP^2$ is not $\psl$-equivariantly birational to $\dP_2$, hence the two embeddings are non-conjugate (see \cite{cheltsov-ineq} and Appendix B therein).

Section\,\ref{stable-conj} reviews the stable non-conjugacy of $\psl$ in $\Cr_2(\C)$ based on the work of Prokhorov and Bogomolov. Two embeddings of a group $G\subset\Cr_n$ obtained from the action on $X$ and $Y$ are said to be stably conjugate if there exists a $G$-equivariant birational map $\Phi\colon X\times\PP^m\dashrightarrow Y\times\PP^m$, for some $m\geq 1$, where the action on the base is trivial.

In Section\,\ref{Cr3} I discuss embeddings of $\psl$ into $\Cr_3(\C)$. The case when the embedding is obtained from a $\psl$-Fano 3-fold is known \cite{three-emb}. I construct infinitely many $\psl$-del Pezzo fibrations and argue that they are all (conjecturally) irrational, and perhaps $\psl$-birationally rigid, except two families, which are both rational. It is a conjecture (Cheltsov and Shramov) that these are the only $\psl$-del Pezzo fibrations in dimension three \cite[Conjecture\,1.4]{belousov}. I show that one of these two families is $\psl$-equivariantly birational to $\PP^2\times\PP^1$, which gets a step closer to this conjecture.

\paragraph*{\bf Acknowledgement} I would like to thank J\'anos Koll\'ar and Konstantin Shramov for showing interest in this problem. I am grateful to the anonymous referee for their constructive feedback.

\section{Stable non-conjugacy of $\psl$ in $\Cr_2(\C)$}\label{stable-conj}
In this section I review the non-conjugacy of $\psl$ in $\Cr_n(\C)$ for $n\geq 2$ coming from the action of $\psl$ on $\PP^2$ and $\dP_2$. All results in this section can essentially be found in \cite{bogo-prokh} and  \cite{yuri}.

The proof of non-conjugacy in $\Cr_2(\C)$ is based on $\psl$-birational rigidity of both varieties $\PP^2$ and $\dP_2$, implying that for any of these two varieties there is no $\psl$-equivariant birational map to a Mori fibre space other than itself. See \cite{pukhlikov} for an introduction to birational rigidity. However, while given current tools in hand it is nearly impossible to prove any stable birational rigidity statements, whatever that notion means, it is not even true that $\PP^2\times\PP^1$ is $\psl$-birationally rigid. This is shown in Lemma\,\ref{lemma:Y-vs-X1}.

On the other hand, in \cite{bogo-prokh} Bogomolov and Prokhorov introduced a stable conjugacy invariant, that is $\H^1\big(G,\Pic(X)\big)$. One can verify that
\[\H^1\big(\psl,\Pic(\PP^2)\big)=\H^1\big(\psl,\Pic(\dP_2)\big)=0,\]
which implies this invariant will not solve our problem. Instead, one should look at the subgroups of $G$ together with this invariant (Lemma\,\ref{collection} below). 

 Let $X$ be a variety on which the group $G$ acts biregularly. Define the collection of group cohomologies associated to $X$ and subgroups of $G$ by
\[\wp(X,G)=\big\{ \H^1\big(H, \Pic(X)\big)\text{, where } H\subset G\subset\Aut(X)\big\}.\]

\begin{lm}\label{collection} Let $G$ act biregularly on two varieties $X$ and $Y$. If $X$ and $Y$ are $G$-stably birational then $\wp(X,G)=\wp(Y,G)$.
\end{lm}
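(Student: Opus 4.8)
The plan is to show that $G$-stable birationality between $X$ and $Y$ forces the two collections of cohomology groups to coincide, by reducing everything to the known birational invariance of $\H^1(H,\Pic(-))$ for a \emph{fixed} subgroup $H$. The starting point is the classical fact (due to Bogomolov--Prokhorov, quoted in the excerpt) that if two smooth projective varieties $X$ and $Y$ carry a biregular action of a finite group $H$ and are $H$-equivariantly birational, then $\H^1(H,\Pic(X))\cong\H^1(H,\Pic(Y))$; more precisely this cohomology group is an $H$-birational invariant. First I would record this for a single subgroup $H\subset G$, being careful that one may need to pass to smooth projective $H$-equivariant models (resolving the birational map by equivariant resolution of singularities and equivariant weak factorization), and that $\H^1(H,\Pic(-))$ is unchanged under such modifications because it is insensitive to adding $H$-linearized blow-ups with smooth centres — the Picard group changes only by a permutation module, which has trivial $\H^1$.

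Next I would address the ``stable'' aspect. Suppose $\Phi\colon X\times\PP^m\dashrightarrow Y\times\PP^m$ is a $G$-equivariant birational map with trivial action on the $\PP^m$ factors. For any subgroup $H\subset G$, restricting the action gives an $H$-equivariant birational map between $X\times\PP^m$ and $Y\times\PP^m$. Now the key computation is that $\H^1\!\big(H,\Pic(X\times\PP^m)\big)=\H^1\!\big(H,\Pic(X)\big)$: since the $H$-action on $\PP^m$ is trivial, $\Pic(X\times\PP^m)\cong\Pic(X)\oplus\Z$ as $H$-modules with $\Z$ carrying the trivial action, and $\H^1(H,\Z)=\Hom(H,\Z)=0$ because $H$ is finite; hence $\H^1(H,\Pic(X)\oplus\Z)=\H^1(H,\Pic(X))$. (Here one uses that $X$ is rational, so $\Pic(X\times\PP^m)=\Pic(X)\oplus\Pic(\PP^m)$ with no correction term.) Combining this with the $H$-birational invariance from the previous paragraph yields
\[
\H^1\big(H,\Pic(X)\big)=\H^1\big(H,\Pic(X\times\PP^m)\big)=\H^1\big(H,\Pic(Y\times\PP^m)\big)=\H^1\big(H,\Pic(Y)\big).
\]
Since this holds for every $H\subset G$, the two collections $\wp(X,G)$ and $\wp(Y,G)$ contain the same groups (indexed the same way by subgroups of $G$), so they are equal.

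The main obstacle I anticipate is not the cohomological bookkeeping but making the $H$-equivariant birational invariance of $\H^1(H,\Pic(-))$ rigorous in the form needed: one must be able to replace a possibly nasty birational map by a chain of $H$-equivariant blow-ups and blow-downs between smooth projective $H$-varieties (equivariant weak factorization), and then check that each elementary step changes $\Pic$ only by an $H$-permutation module summand. Once that lemma is in place — and in the context of this paper it can simply be cited from \cite{bogo-prokh} — the rest is the short module-theoretic argument above. A minor point to handle with care is that $X$ and $Y$ here are specific rational varieties (del Pezzo fibrations, $\PP^2\times\PP^1$, etc.) whose Picard groups are finitely generated free $\Z$-modules, so all the splittings used are legitimate and no torsion phenomena intervene.
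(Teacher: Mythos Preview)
Your proposal is correct and follows essentially the same approach as the paper: reduce to a fixed subgroup $H\subset G$ and invoke the stable birational invariance of $\H^1(H,\Pic(-))$. The paper's own proof is a one-line citation to \cite[Proposition\,2.2]{bogo-prokh}, whereas you unpack the two ingredients of that citation (equivariant birational invariance via permutation-module changes to $\Pic$, and the $\times\,\PP^m$ step via $\Pic(X\times\PP^m)\cong\Pic(X)\oplus\Z$ with $\H^1(H,\Z)=0$); the substance is the same.
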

\begin{proof}Let $H$ be a subgroup of $G$ acting on $X$ and $Y$. It follows from a standard proof that $\H^1\big(H, \Pic(X)\big)\cong\H^1\big(H, \Pic(Y)\big)$. See \cite[Proposition\,2.2]{bogo-prokh} and references therein.
\end{proof}

\begin{theorem}  The two embeddings of $\psl$ in $\Cr_2(\C)$ are not stably conjugate.
\end{theorem}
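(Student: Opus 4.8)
The plan is to use Lemma~\ref{collection} contrapositively: it suffices to exhibit one subgroup $H\subseteq\psl$ with $\H^1\big(H,\Pic(\dP_2)\big)\neq 0$. Since $\Aut(\PP^2)$ acts trivially on $\Pic(\PP^2)=\Z$, one has $\H^1\big(H,\Pic(\PP^2)\big)=\Hom(H,\Z)=0$ for every finite subgroup $H$, i.e.\ $\wp(\PP^2,\psl)=\{0\}$; so producing such an $H$ gives $\wp(\dP_2,\psl)\neq\wp(\PP^2,\psl)$, hence $\dP_2$ and $\PP^2$ are not $\psl$-stably birational and the two embeddings into $\Cr_2(\C)$ are not stably conjugate.

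I would take $H=\langle\sigma\rangle\cong\Z/2$ with $\sigma$ any involution of $\psl$. Recall that $\psl$ acts on $\Pic(\dP_2)$ through $W(E_7)$, fixing $K_{\dP_2}$ and acting on $K_{\dP_2}^{\perp}\cong E_7$ via the $7$-dimensional (Steinberg) representation; as this representation has character $-1$ on involutions, $\sigma$ has $(\pm1)$-eigenspaces of rank $4$ on $\Pic(\dP_2)\otimes\C$. The indecomposable $\Z_2[\Z/2]$-lattices being the trivial, the sign and the regular one, we get
\[
\Pic(\dP_2)\otimes\Z_2\;\cong\;(\Z_2)^{a}\oplus(\Z_2^{-})^{a}\oplus\big(\Z_2[\Z/2]\big)^{c},\qquad a+c=4,
\]
so that $\H^1\big(\langle\sigma\rangle,\Pic(\dP_2)\big)\cong(\Z/2)^{a}$, and it remains to show $a\geq 1$, i.e.\ that $\Pic(\dP_2)$ is not $\Z_2[\Z/2]$-free.

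To see this I would determine $\mathrm{Fix}(\sigma)$ geometrically. Writing $\dP_2$ as the double cover $\pi\colon\dP_2\to\PP^2$ branched over the Klein quartic $C$, the element $\sigma\in\psl\subseteq\SL_3(\C)$ is conjugate to $\diag(1,-1,-1)$, hence fixes a line $L_0$ pointwise and an isolated point $p_0\notin C$ (indeed $\sigma|_C$, an involution of the non-hyperelliptic genus-$3$ curve $C$, has exactly four fixed points, all on $L_0$); a Lefschetz count — the trace of $\sigma$ on $\H^2(\dP_2)\otimes\C\cong\mathbf1\oplus\mathbf7$ is $0$, so $\chi\big(\mathrm{Fix}(\sigma)\big)=2$ — then forces $\sigma$ to fix $\pi^{-1}(L_0)$, a smooth elliptic curve, pointwise and to fix the two points of $\pi^{-1}(p_0)$. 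Thus $\sum_i\dim_{\F_2}\H^i\big(\mathrm{Fix}(\sigma);\F_2\big)=4+2=6$ and $\sum_i\dim_{\F_2}\H^i(\dP_2;\F_2)=10$, so by Smith theory the free $\Z_2[\Z/2]$-rank $c$ of $\Pic(\dP_2)\otimes\Z_2$ (all of it sitting in $\H^2$, since the $\Z/2$-action on $\H^0,\H^4$ is trivial and $\H^1=\H^3=0$) satisfies $c\leq\tfrac12(10-6)=2$; hence $a\geq2$ and $\H^1\big(\langle\sigma\rangle,\Pic(\dP_2)\big)\neq0$. The main obstacle is precisely this last point — pinning down the $\Z_2[\Z/2]$-module $\Pic(\dP_2)$ via the fixed locus of $\sigma$ and the Smith inequality; if this argument were to collapse for $\langle\sigma\rangle$, the fallback is to run the analogous $\Z_2[H]$-computation for a Klein four-subgroup, or for $\mathbb{A}_4\subseteq\psl$.
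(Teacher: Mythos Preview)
Your approach is essentially the same as the paper's: both invoke Lemma~\ref{collection} with the subgroup $H=\Z/2\subset\psl$ and rest on the same geometric fact, that an involution in $\psl$ fixes an elliptic curve $\pi^{-1}(L_0)$ on $\dP_2$. The only difference is that the paper simply cites \cite[Theorem~1.2]{yuri} for the nonvanishing of $\H^1(\Z/2,\Pic(\dP_2))$, whereas you unpack that computation via the $\Z_2[\Z/2]$-module decomposition and a Smith-theory bound; this is correct and more self-contained, though your input that $K^{\perp}$ carries the Steinberg representation is itself an external fact (which you could bypass by determining $\mathrm{Fix}(\sigma)$ directly from the model $\{w^2=f\}\subset\PP(1,1,1,2)$, since $\psl$, being perfect, must act trivially on $w$).
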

\begin{proof} This is an immediate consequence of Theorem\,1.2 in\,\cite{yuri} and Lemma\,\ref{collection} above. Note that the $\Z_2\subset\psl$ fixes a line in $\PP^2$, where the pre-image of this line in $\dP_2\xrightarrow{2:1}\PP^2$, is an elliptic curve.
\end{proof}

\begin{cor} For any $n\geq 2$ there are at least two non-conjugate embeddings of $\psl$ in $\Cr_n(\C)$.
\end{cor}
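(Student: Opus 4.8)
The plan is to reduce the corollary to the preceding theorem by multiplying the two two-dimensional models by a projective space on which $\psl$ acts trivially. For $n=2$ there is nothing to do: the statement is exactly the preceding theorem, since stably conjugate embeddings are in particular conjugate. So from now on I would fix $n\geq 3$ and set
\[X_1=\PP^2\times\PP^{n-2},\qquad X_2=\dP_2\times\PP^{n-2},\]
letting $\psl$ act on each $X_i$ through its given action on the first factor and trivially on $\PP^{n-2}$. Both $X_i$ are rational $n$-folds carrying a biregular $\psl$-action, so any birational identification $X_i\dashrightarrow\PP^n$ produces an embedding $\psl\hookrightarrow\Cr_n(\C)$; write $G_1,G_2\subset\Cr_n(\C)$ for the images.

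First I would recall the standard translation: the subgroups $G_1$ and $G_2$ are conjugate in $\Cr_n(\C)$ precisely when there is a $\psl$-equivariant birational map $X_1\dashrightarrow X_2$, equivariance understood up to an automorphism of $\psl$ (which, as explained below, is harmless). One direction transports a conjugating element of $\Cr_n(\C)$ through the chosen identifications $X_i\dashrightarrow\PP^n$; the other is immediate.

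Next, suppose for contradiction that $G_1$ and $G_2$ are conjugate. Then there is a $\psl$-equivariant birational map
\[\Phi\colon \PP^2\times\PP^{n-2}\dashrightarrow\dP_2\times\PP^{n-2}\]
with $\psl$ acting trivially on both copies of $\PP^{n-2}$. Since $n-2\geq 1$, this is by definition a stable conjugacy (take $m=n-2$ in the definition recalled before Lemma\,\ref{collection}) between the embedding of $\psl$ in $\Cr_2(\C)$ associated to $\PP^2$ and the one associated to $\dP_2$, contradicting the preceding theorem. Hence $G_1\neq G_2$ are non-conjugate, and they provide the two required embeddings of $\psl$ into $\Cr_n(\C)$.

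I do not anticipate a substantive obstacle here: the corollary is a formal consequence of the preceding theorem and the definition of stable conjugacy. The one point to treat carefully is the appearance of an outer automorphism of $\psl$ in the ``conjugate subgroups $\Leftrightarrow$ equivariant birational map'' dictionary; but the obstruction underlying the preceding theorem is the inequality of the sets $\wp(\PP^2,\psl)$ and $\wp(\dP_2,\psl)$ of Lemma\,\ref{collection}, and such a set of abstract cohomology groups is unchanged when an automorphism of $\psl$ permutes the subgroups $H\subset\psl$. So the contradiction above is insensitive to that twist, and the argument goes through for every $n\geq 2$.
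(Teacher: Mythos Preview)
Your argument is correct and is exactly the one the paper has in mind (the corollary is stated without proof because it follows immediately from the preceding theorem together with the definition of stable conjugacy). One small slip: in your $n=2$ sentence you wrote ``stably conjugate embeddings are in particular conjugate'', but the implication you need is the opposite one---conjugate implies stably conjugate (extend an equivariant birational map $X\dashrightarrow Y$ by the identity on a $\PP^1$-factor), and then the contrapositive gives non-conjugacy in $\Cr_2(\C)$ from the theorem. Your handling of the outer-automorphism issue via the $\wp$-invariant is also fine.
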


\section{Embeddings of $\psl$ in $\Cr_3(\C)$}\label{Cr3}

It is a standard technique to construct embeddings of a group $G$ in $\Cr_n(\C)$ by constructing rational Mori fibre spaces which admit a biregular
$G$-action. Mori fibre spaces in dimension three are either Fanos or del Pezzo fibrations over a curve or conic bundles over surfaces.

In \cite{three-emb} Cheltsov and Shramov constructed three non-conjugate embeddings of $\psl$ in $\Cr_3(\C)$ using the action of this group on $\PP^3$ and a special Fano variety in the famous Fano family $V_{22}$. The two families considered in Section\,\ref{stable-conj} are del Pezzo fibrations. In this section I construct infinitely many families of del Pezzo fibrations, one of which is the $\dP_2\times\PP^1$, that admit an action of $\psl$. It should be mentioned that these families have been considered by Belousov \cite{belousov} with a different description. Then I show that a member of this family is $\psl$-birational to $\PP^2\times\PP^1$, which makes it $\psl$-birationally nonrigid. Then I conjecture that these are all all irrational except the two particular ones, hence claiming that there are only two embeddings of $\psl$ in $\Cr_3(\C)$ obtained from del Pezzo fibrations.

\subsection*{$\psl$-del Pezzo fibrations over $\PP^1$}

Let $\PP=\PP^1\times\PP(1,1,1,2)$,
and denote by $\pi$, the natural projection $\pi\colon\PP\to\PP^1$. 
Suppose
\[f(x,y,z)=x^3y+y^3z+z^3x\]
and let 
$\X'_n\subset\PP$, for a non-negative integer $n$, be a $3$-fold given by the equation 
\[\alpha_n(u,v)t^2+\beta_n(u,v)f(x,y,z)=0\,, \text{ where}\]
\begin{enumerate}[(i)]
\item $u$ and $v$ are the homogeneous coordinates on $\PP^1$,
\item $x$, $y$ and $z$ are weighted homogeneous coordinates 
of weight $1$ on $\PP(1,1,1,2)$, and 
$t$ is a weighted homogeneous coordinate
of weight $2$ on $\PP(1,1,1,2)$,
\item $\alpha_n$ and $\beta_n$ are general homogeneous polynomials of degree $n$ such that  $|Z_\alpha|=|Z_\beta|=n$ and $Z_\alpha\cap Z_\beta=\varnothing$, where $Z_\alpha=\{\alpha_n=0\}\subset\PP^1$ and $Z_\beta=\{\beta_n=0\}\subset\PP^1$.
\end{enumerate}
There is an action of the group $\psl$ on $\X_n'$, induced from the natural action of $\psl$ on the fibres ($\dP_2$ surfaces) of the projection $\X_n'\rightarrow\PP^1$.

Note that the variety $\X'_0$ is nothing but $\dP_2\times\PP^1$.
The variety $\X'_1$ is unique, since there is only one pair 
of linear forms with distinct zeroes on $\PP^1$ up to a change of coordinates.

Let $T\subset\X'_n$ be the divisor defined by the equation $f(x,y,z)=0$.
Denote by $a_1,\dots, a_n\in\PP^1$ the 
points of the set $Z_\beta$, and by $S_1, \dots, S_n$ the fibres 
of the induced fibration
$\pi\colon\X'_n\to\PP^1$ over these points, and let $C_i=S_i\cap T$.
Let $p_1,\dots,p_n$ be the points given by  $\{\alpha=x=y=z=0\}\subset\X'_n\,$, and denote by $S'_1, \ldots, S'_n$ the fibres
of $\pi$ passing through these points.

The following lemma follows from the construction of $\X'_n$.
\begin{lm}
For $\X'_n$, constructed as above, we have
\begin{enumerate}[(i)]
\item $Sing(\X'_n)=\big\{p_1,\dots, p_n\big\}\cup C_1\cup\dots\cup C_n$,
\item each of the points $p_j$ is a singular point of type $\frac{1}{2}(1,1,1)$ on $\X'_n$, and $\X'_n$ is locally isomorphic to $\A_1\times\C$ along the curves $C_i$,
\item the fibres $S_1, \dots, S_n$ are non-reduced fibres of $\pi$,
and $S_i\cong\PP^2$,
\item each of the fibres $S^\prime_1,\dots,S^\prime_n$ has a unique singularity at the point 
$p_j$, and is isomorphic to the cone
over the plane quartic curve $f(x,y,z)=0$.
\item if $S$ is a fibre of $\pi$ different from all $S_i$ and $S_j'$, then 
$S$ is non-singular.
\end{enumerate}
\end{lm}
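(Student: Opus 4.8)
The plan is to analyse $\X'_n$ in the standard affine charts of the ambient fourfold $\PP=\PP^1\times\PP(1,1,1,2)$, treating separately the locus where $\PP$ itself is singular. Recall that $\PP(1,1,1,2)$ is smooth away from the cyclic quotient point $q:=(0:0:0:1)$, of type $\frac{1}{2}(1,1,1)$, so that $\Sing(\PP)=\PP^1\times\{q\}$ is a curve along which $\PP$ has transverse $\frac{1}{2}(1,1,1)$ singularities. On $\PP^1\times\{q\}$ one has $x=y=z=0$, so the defining equation of $\X'_n$ reduces to $\alpha_n(u,v)t^2=0$, whence $\X'_n$ meets this locus exactly in the $n$ points $p_1,\dots,p_n$, which lie over $Z_\alpha$. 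In the orbifold chart $\{t\neq0\}$ --- with coordinates $x,y,z$ on $\C^3/\Z_2$ (the $\Z_2$ acting by $-1$) and an affine coordinate $\lambda$ on $\PP^1$ --- the equation becomes $\alpha_n(\lambda)+\beta_n(\lambda)f(x,y,z)=0$. A short Jacobian check, using that every zero of $\alpha_n$ is simple and disjoint from the zeros of $\beta_n$, shows that the corresponding hypersurface of $\C^3\times\C_\lambda$ is smooth throughout this chart; its $\Z_2$-fixed points are precisely the $p_j$, and at each $p_j$ the $\lambda$-derivative is nonzero, so near $p_j$ the hypersurface is $\Z_2$-equivariantly the graph $\lambda=h(x,y,z)$ of a $\Z_2$-invariant function with $h(0)=0$. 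Hence $\X'_n$ is locally isomorphic to $\C^3/\Z_2$ near each $p_j$ and smooth elsewhere in this chart; in particular each $p_j$ is a $\frac{1}{2}(1,1,1)$ point, and these are all the singularities of $\X'_n$ lying on $\Sing(\PP)$.

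Next I would treat the smooth locus of $\PP$, i.e.\ the charts $\{x\neq0\},\{y\neq0\},\{z\neq0\}$ on $\PP(1,1,1,2)$, which are permuted by the cyclic symmetry of $f$ and together cover $\{t=0\}$. In the chart $x=1$, with coordinates $y,z,\tau=t/x^2$ and $\lambda$, the equation reads $\alpha_n(\lambda)\tau^2+\beta_n(\lambda)g(y,z)=0$, where $g(y,z)=f(1,y,z)$ cuts out the affine Klein quartic. Computing partials, a singular point forces $\tau=0$; so long as $\beta_n(\lambda)\neq0$ it also forces $g_y=g_z=0$, whence (by Euler's identity and smoothness of the Klein quartic) it forces $g=0$, which has no solution; thus singularities occur only over $\lambda\in Z_\beta$, and there --- using that $\beta_n$ has simple zeros and $Z_\alpha\cap Z_\beta=\varnothing$ --- one obtains exactly the locus $\{\lambda\in Z_\beta,\ \tau=0,\ g=0\}$, which is $C_1\cup\dots\cup C_n$. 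Combined with the previous paragraph, this gives $\Sing(\X'_n)=\{p_1,\dots,p_n\}\cup C_1\cup\dots\cup C_n$, part (i). For part (ii): near a point of $C_i$ the Klein quartic is smooth, so I may use $g$ in place of one of $y,z$; writing $\lambda=a_i+w$ and using $\beta_n(\lambda)=w\cdot(\text{unit})$ together with $\alpha_n(a_i)\neq0$, the equation becomes $\tau^2+wg=0$ after absorbing units, which exhibits $\X'_n$ as $\A_1\times\C$ along $C_i$.

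Finally, parts (iii)--(v) follow by restricting the equation to a fibre $\pi^{-1}(\lambda_0)$. If $\lambda_0\in Z_\beta$, then $\alpha_n(\lambda_0)\neq0$ and the fibre is $\{t^2=0\}\subset\PP(1,1,1,2)$: non-reduced, with reduction $\{t=0\}\cong\PP^2$ --- part (iii). If $\lambda_0\in Z_\alpha$, then $\beta_n(\lambda_0)\neq0$ and the fibre is $\{f(x,y,z)=0\}\subset\PP(1,1,1,2)$, a cone over the Klein quartic with vertex $q=p_j$; in the charts $\{x\neq0\}$, $\{y\neq0\}$, $\{z\neq0\}$ it is the product of the smooth affine Klein quartic with a line, hence smooth away from $q$, so $q$ is its only singular point --- part (iv). Otherwise $\alpha_n(\lambda_0)\beta_n(\lambda_0)\neq0$, and after rescaling the fibre is $\{t^2+cf(x,y,z)=0\}$ with $c\neq0$; this surface avoids $q$ and is the double cover of $\PP^2$ branched over the smooth Klein quartic, hence a smooth $\dP_2$ --- part (v).

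I expect the one real subtlety to be the local picture at the $p_j$: one must be sure that $\X'_n$ passes through the ambient $\frac{1}{2}(1,1,1)$ point transversally enough that the induced singularity is again exactly $\frac{1}{2}(1,1,1)$ rather than something worse, and this is precisely where the hypotheses that $Z_\alpha$ consists of $n$ distinct points disjoint from $Z_\beta$ are used --- they guarantee $\partial_\lambda\big(\alpha_n(\lambda)+\beta_n(\lambda)f\big)\neq0$ at $p_j$, so the implicit function theorem applies $\Z_2$-equivariantly. Everything else is a routine chart-by-chart Jacobian computation, resting only on the smoothness of the Klein plane quartic.
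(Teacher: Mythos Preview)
Your proof is correct. The paper itself gives no proof of this lemma --- it merely asserts that the statement ``follows from the construction of $\X'_n$'' --- so your chart-by-chart Jacobian analysis supplies precisely the verification that the paper omits. The organisation (orbifold chart $\{t\neq 0\}$ for the $p_j$, affine charts $\{x\neq 0\}$ etc.\ for the $C_i$, then fibrewise restriction for (iii)--(v)) is the natural one, and each step checks out; in particular your treatment of the $p_j$ via the $\Z_2$-equivariant implicit function theorem, and of the $C_i$ via the local normal form $\tau^2+wg=0$, is exactly right. One cosmetic point: when you write ``whence (by Euler's identity and smoothness of the Klein quartic) it forces $g=0$, which has no solution'', the logic is slightly compressed --- what you mean is that $F=0$ already gives $g=0$, and then $g=g_y=g_z=0$ is impossible since the affine Klein quartic is smooth (Euler being used only to deduce $f_x=0$ from the other vanishings, if you want to phrase it projectively). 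But this is phrasing, not substance.
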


Let $\nu\colon \tilde{\X}_n\to\X'_n$ be a blow up 
of the $3$-fold $\X'_n$ at the curves
$C_1,\ldots, C_n$, and $\mu\colon \tilde{\X}_n\to\X_n$ be a 
contraction of the strict transforms $\tilde{S}_i$ of the non-reduced 
fibres $S_i$ on $\tilde{\X}_n$.
Both $\nu$ and $\mu$ are $\psl$-equivariant birational
morphisms:
\[
\xymatrix{
&\tilde{\X}_n\ar@{->}[dr]^{\mu}\ar@{->}[ld]_{\nu}&\\%
\X'_n&&\X_n}
\]

Let $Q_i=\mu(\tilde{S}_i)$ for $1\le i\le n$.
Since $\mu\circ\nu^{-1}$ is an isomorphism in the neighbourhood of the 
points $P_j\in\X'_n$, I will use the same letters $P_j$ to denote the 
corresponding points of $\X_n$.

\begin{prop}\label{Xn-properties} For $\X_n$ as above, we have
\begin{enumerate}[(i)]
\item $\Sing(\X_n)=\big\{P_1,\dots, P_n, Q_1, \dots, Q_n\big\}$,
\item each of the points $P_j$ and $Q_i$ is a
singular point of type $\frac{1}{2}(1,1,1)$ on $\X_n$, and in particular
\item the variety $\X_n$ is a $\psl$-Mori fibre space.
\end{enumerate}
\end{prop}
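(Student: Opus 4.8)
The plan is to describe the two birational morphisms $\nu$ and $\mu$ explicitly and then to read off everything from local models together with one Picard-rank count. Since $C_i\subset S_i$ while each $p_j$ lies on the fibre $S'_j$, and $S_i\cap S'_j=\varnothing$ because $Z_\alpha\cap Z_\beta=\varnothing$, both $\nu$ and $\mu$ are isomorphisms over a neighbourhood of every $p_j$; hence the $P_j$ survive on $\X_n$ as singular points of type $\frac{1}{2}(1,1,1)$, and it remains to understand what happens over the points $a_i\in Z_\beta$. By the previous lemma $\X'_n$ is analytically isomorphic to $\A_1\times\C$ along $C_i$; writing the $\A_1$-surface as $\{\bar t^2=sw\}$ one has $C_i=\{\bar t=s=w=0\}$, the reduced fibre $S_i=\{\bar t=s=0\}$ (the scheme fibre over $a_i$ being $2S_i$), $T=\{\bar t=w=0\}$, and the function $f$ restricting to $w$.

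For the blow-up $\nu$: since $\X'_n$ has transverse $A_1$-singularities along $C_i$, blowing up $C_i$ is the minimal (hence crepant) resolution there, so $\tilde{\X}_n$ is smooth along the exceptional divisor $E_i$ (a ruled surface over $C_i$) and therefore smooth away from $\{P_1,\dots,P_n\}$. Moreover the strict transform $\tilde S_i$ is the blow-up of $S_i$ along the Cartier divisor $C_i\subset S_i\cong\PP^2$, so $\tilde S_i\cong\PP^2$, the map $\nu\colon\tilde S_i\to S_i$ is an isomorphism, and $\tilde S_i\cap E_i$ maps isomorphically onto the plane quartic $C_i$, hence lies in $|\O_{\PP^2}(4)|$.

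The crux is the contraction $\mu$, i.e.\ the normal bundle $N_{\tilde S_i/\tilde{\X}_n}$. Since $\X'_n\in|\O_\PP(n,4)|$ and $K_\PP=\O_\PP(-2,-5)$, adjunction gives $K_{\X'_n}=\O_\PP(n-2,-1)|_{\X'_n}$, and as $S_i\cong\{a_i\}\times\PP^2\subset\PP^1\times\PP(1,1,1,2)$ we get $K_{\X'_n}|_{S_i}=\O_{\PP^2}(-1)$. Because $\nu$ is crepant, $K_{\tilde{\X}_n}|_{\tilde S_i}=\O_{\PP^2}(-1)$ as well; combined with $K_{\tilde S_i}=\O_{\PP^2}(-3)$ and adjunction on the threefold $\tilde{\X}_n$, which is smooth near $\tilde S_i$, this forces $N_{\tilde S_i/\tilde{\X}_n}=\O_{\PP^2}(-2)$. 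A copy of $\PP^2$ with this normal bundle inside a smooth threefold is contracted --- by the inverse of the weighted blow-up of a $\frac{1}{2}(1,1,1)$-point --- to such a point, $\psl$-equivariantly. The $\tilde S_i$ are pairwise disjoint and disjoint from the $P_j$ (distinct fibres of $\pi$ do not meet), so $\mu$ replaces them by $n$ new singular points $Q_1,\dots,Q_n$ of type $\frac{1}{2}(1,1,1)$ and is an isomorphism elsewhere, which proves (i) and (ii). I expect this normal-bundle identification to be the one genuinely calculational step, the rest being formal.

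For (iii): $\X_n$ is projective (the image of the projective threefold $\tilde{\X}_n$) and has only the isolated terminal quotient points $P_j,Q_i$. The computation $K_{\tilde{\X}_n}|_{\tilde S_i}=\O_{\PP^2}(-1)$ shows that $\mu$ --- contracting the $\tilde S_i$ one at a time --- is a composition of $K$-negative divisorial contractions out of the $\Q$-factorial threefold $\tilde{\X}_n$, hence $\X_n$ is $\Q$-factorial. For the invariant Picard number, the generic fibre of $\pi\colon\X'_n\to\PP^1$ is a del Pezzo surface of degree $2$ which is $\psl$-minimal (the $7$-dimensional $E_7$-part carries no $\psl$-invariant class; cf.\ \cite{cheltsov-ineq}), and every fibre of $\pi$ is irreducible, so $\rho^{\psl}(\X'_n)=2$; blowing up the $n$ curves $C_i$ adds $n$ and contracting the $n$ divisors $\tilde S_i$ removes $n$, giving $\rho^{\psl}(\X_n)=2$, i.e.\ $\rho^{\psl}(\X_n/\PP^1)=1$ for the induced fibration $\phi\colon\X_n\to\PP^1$. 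Finally $-K_{\X_n}$ is $\phi$-ample: the relative $\psl$-Mori cone $\NNE^{\psl}(\X_n/\PP^1)$ is generated by a curve class $\ell$ lying in a general fibre $F\cong\dP_2$, on which $-K_{\X_n}\cdot\ell=-K_F\cdot\ell>0$. Hence $\phi$ exhibits $\X_n$ as a $\psl$-Mori fibre space.
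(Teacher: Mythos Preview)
Your argument is correct, but it takes a genuinely different route from the paper's own proof. The paper does not analyse $\nu$ and $\mu$ at all; instead it identifies $\X_n$ directly as the hypersurface $\alpha\beta\,t^2+f=0$ of bidegree $(0,4)$ in the rank-two toric variety with Cox coordinates $u,v,x,y,z,t$ of weights $(1,0),(1,0),(0,1),(0,1),(0,1),(-n,2)$ and irrelevant ideal $(u,v)\cap(x,y,z,t)$. In that model the ambient singular locus is the curve $\PP^1_{u:v}$ of $\tfrac12(1,1,1)$ points at $x=y=z=0$, and the hypersurface meets it transversally at the $2n$ solutions of $\alpha\beta=0$, giving (i) and (ii) at once; the Picard number $\rho(\X_n)=2$ then follows from the Lefschetz property for quasismooth hypersurfaces in toric varieties, and (iii) is immediate.

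Your approach trades this global toric description for an explicit local analysis of the birational modifications: the crepancy of $\nu$, the adjunction computation $N_{\tilde S_i/\tilde{\X}_n}\cong\O_{\PP^2}(-2)$, and the equivariant Picard bookkeeping. What you gain is a self-contained argument that avoids Cox rings and toric Lefschetz, and that makes the geometry of $\mu$ (as the inverse of a $\tfrac12(1,1,1)$ weighted blow-up) fully transparent. What you lose is brevity, and you end up with only $\rho^{\psl}(\X_n)=2$ rather than the stronger $\rho(\X_n)=2$ that the paper obtains; this is enough for the $\psl$-Mori fibre space statement, but the paper's conclusion is sharper. Your final step for relative ampleness is a little compressed: strictly speaking, positivity on the $\psl$-invariant cone gives only $\psl$-nefness, but one recovers genuine $\pi$-ampleness by averaging any $\pi$-ample class over $\psl$ and using $\rho^{\psl}(\X_n/\PP^1)=1$.
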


\begin{proof} In fact $\X_n$ can be described as a hypersurface in a toric variety $T$ of Picard number two. The coordinate ring of the toric space is a $\Z^2$-graded ring with variables $u,v,x,y,z,t$, and grading $(1,0), (1,0)$ for $u$ and $v$, and $(0,1), (0,1),(0,1),(-n,2)$ for $x,y,z,t$, with irrelevant ideal $(u,v)\cap(x,y,z,t)$. Then $X_n$ is a degree $(0,4)$ hypersurface defined by $\alpha\beta t^2+f=0$. The singular locus of $T$ is $\PP^1_{u:v}\times\frac{1}{2}(1,1,1)$ quotient singularity. This locus is cut out by $\X_n$ in $2n$ points, the solutions of $\alpha\beta=0$ in $\PP^1$, so that $\X_n$ has $2n$ singular points of type $\frac{1}{2}(1,1,1)$. Clearly $\X_n$ is singular, as it has only isolated quotient singularities, and its Picard number is $2$ by the Lefschetz property. Hence, the fibration to $\PP^1$ is a $\dP_2$ fibration and a Mori fibration.
\end{proof}

\begin{rmk}Note that with the description in the proof of Proposition\,\ref{Xn-properties} the birational map between $\X'_n$ and $\X_n$ can be recovered from $t\longleftrightarrow \beta t$, between $T\longleftrightarrow \PP^1\times\PP(1,1,1,2)$.\end{rmk}

\begin{conj}[Cheltsov-Shramov, Conjecture\,1.4\,\cite{belousov}]
The varieties $\PP^2\times\PP^1$ and $\X_n$, for $n\ge 0$, are the only 
$\psl$-Mori fibre spaces over $\PP^1$ in dimension $3$.
\end{conj}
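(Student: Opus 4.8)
The plan is to classify three-dimensional $\psl$-Mori fibre spaces $\rho\colon\mathcal{Z}\to\PP^1$ by passing to the generic fibre of $\rho$. Since $\psl$ is simple and non-abelian it must act trivially on $\PP^1$, because no finite subgroup of $\PGL_2(\C)$ contains a simple group of order $168$; hence $\psl$ acts fibrewise and $\mathcal{Z}$ is a $\psl$-equivariant del Pezzo fibration whose fibre over the generic point $\eta=\Spec\C(t)$ is a del Pezzo surface $\mathcal{Z}_\eta$ over $\C(t)$ carrying a faithful biregular $\psl$-action. The two-dimensional input — essentially the Dolgachev--Iskovskikh classification — is that over an algebraically closed field the only del Pezzo surfaces admitting a faithful $\psl$-action are $\PP^2$ (the Klein action, fixing $\{f=0\}$) and the double cover $\dP_2$ of $\PP^2$ branched over $\{f=0\}$: for degrees $1,3,4,5,6,7,8$ the automorphism group has order prime to $7$ or is otherwise too small. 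So $\mathcal{Z}_\eta$ becomes $\PP^2$ or $\dP_2$ over $\overline{\C(t)}$, and the argument splits into two cases.

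Case $\PP^2$. Over $\C(t)$ the Brauer obstruction vanishes by Tsen's theorem, so the fibration is the projectivisation $\PP(\mathcal{E})$ of a rank-three bundle on $\PP^1$, and, as $\psl$ has no nontrivial characters, the $\psl$-action linearises on a suitable twist $\mathcal{E}\otimes\O_{\PP^1}(c)$. Writing $\mathcal{E}\cong\bigoplus_i\O_{\PP^1}(a_i)$ by Grothendieck, $\psl$ acts on a three-dimensional space built from the summands; since the only three-dimensional representations of $\psl$ are the two conjugate irreducibles, $\psl$ can neither split off a proper subbundle nor permute non-isomorphic summands, so $\mathcal{E}\cong\O_{\PP^1}(a)^{\oplus 3}$ with the irreducible action and $\PP(\mathcal{E})\cong\PP^2\times\PP^1$ $\psl$-equivariantly.

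Case $\dP_2$. The relative anticanonical structure is $\psl$-canonical and of degree two, so it realises $\mathcal{Z}$ as a double cover of a $\PP^2$-fibration $\mathcal{W}\to\PP^1$ branched along a divisor cutting a quartic on each fibre, with $\psl$ acting fibrewise on $\mathcal{W}$. By the previous case $\mathcal{W}$ is $\psl$-birational to $\PP^2\times\PP^1$; since $f$ spans the one-dimensional space of $\psl$-invariant plane quartics, the branch divisor has, after coordinate changes and the twist recorded by the weight of $t$, the shape $\alpha_n(u,v)t^2+\beta_n(u,v)f(x,y,z)=0$ — exactly the construction of $\X'_n$ — and running the relative minimal model program (the blow-up $\nu$ of the curves $C_i$ followed by the contraction $\mu$ of the non-reduced fibres) terminates at $\X_n$.

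The real obstacle is making ``only'' precise and complete. One must: (i) run a full $\psl$-equivariant Sarkisov analysis to show that every $\psl$-MFS over $\PP^1$ is $\psl$-square equivalent over $\PP^1$ to one on the list, independently of the starting birational representative, and that no link escapes to a different fibration type; (ii) control the twisted $\C(t)$-forms in both cases, showing that equivariant Brauer--Severi data and quadratic-twist data produce nothing new over the function field of $\PP^1$; and (iii) match the discrete invariant of the $\dP_2$-family with the integer $n=\deg\alpha_n=\deg\beta_n$, checking that genericity of $\alpha_n,\beta_n$ hides no further birational types. The present paper does not settle all of this; it contributes the explicit families $\X_n$ with their singularities and Mori-fibre-space structure, and proves that $\X_1$ is $\psl$-equivariantly birational to $\PP^2\times\PP^1$, lowering the conjectural number of embeddings of $\psl$ in $\Cr_3(\C)$ coming from del Pezzo fibrations to two.
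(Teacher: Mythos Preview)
The statement you are addressing is a \emph{conjecture}; the paper records it as such and offers no proof. There is therefore nothing in the paper to compare your proposal against. The paper's contribution toward the conjecture is limited to constructing the families $\X_n$, verifying that they are $\psl$-Mori fibre spaces (Proposition~\ref{Xn-properties}), and proving Theorem~\ref{lemma:Y-vs-X1}, which shows $\X_1$ is $\psl$-birational to $\PP^2\times\PP^1$. You correctly note this yourself in your final paragraph.

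Your outline is a reasonable strategy and the opening moves are sound: $\psl$ does act trivially on the base since no finite subgroup of $\PGL_2(\C)$ contains it, and over $\overline{\C(t)}$ the only del Pezzo surfaces with a faithful $\psl$-action are $\PP^2$ and $\dP_2$. In the $\PP^2$ case your argument that irreducibility of the three-dimensional representation forces $\mathcal{E}\cong\O(a)^{\oplus 3}$ is essentially correct, though it should be phrased more carefully: an unequal splitting type would produce a $\psl$-invariant proper linear subspace in each fibre, contradicting irreducibility.

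Where the proposal remains genuinely incomplete is precisely where you say it is. In the $\dP_2$ case you assert that the branch divisor can be brought to the form $\alpha_n t^2+\beta_n f$, but this is the heart of the matter and is not justified: one needs to pin down the relative anticanonical model together with its $\psl$-linearisation, identify the weight-four invariants over $\C(t)$, and then show that the resulting normal forms, after the equivariant MMP, exhaust the list $\{\X_n\}_{n\ge 0}$ with no redundancies or omissions. Your items (i)--(iii) name these gaps accurately. As written, the proposal is a credible plan of attack, not a proof; and since the paper itself does not claim a proof, that is the honest status of the problem.
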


\begin{theorem}\label{lemma:Y-vs-X1}
There is a $\psl$-equivariant birational equivalence 
between the varieties $\PP^2\times\PP^1$ and $\X_1$.
\end{theorem}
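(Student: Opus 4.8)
The plan is to use the toric description of $\X_1$ from Proposition~\ref{Xn-properties} and perform an explicit sequence of $\psl$-equivariant birational modifications that untwists the $\dP_2$-fibration into a $\PP^2$-bundle. When $n=1$, the variety $\X_1$ lives in the toric $4$-fold $T$ whose Cox ring has variables $u,v,x,y,z,t$ with bidegrees $(1,0),(1,0),(0,1),(0,1),(0,1),(-1,2)$, and $\X_1$ is cut out by $\alpha\beta t^2+f(x,y,z)=0$, where now $\alpha,\beta$ are linear in $(u,v)$ with disjoint zeros --- so after a coordinate change on $\PP^1$ we may take $\alpha=u$, $\beta=v$, and the equation becomes $uv\,t^2+f(x,y,z)=0$. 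The key observation is that the group $\psl$ acts only on the $x,y,z,t$ block (fibrewise on the $\dP_2$'s), trivially on $\PP^1_{u:v}$; so any birational map that is a composition of operations defined purely in terms of the monomial grading, together with the fixed $\psl$-equivariant double cover structure $\dP_2\to\PP^2$, will automatically be $\psl$-equivariant. I would therefore work entirely with the toric ambient space and monomial changes of variables, checking equivariance only at the level "does this move touch the $\psl$-representation on $\langle x,y,z\rangle$ and on $t$ compatibly with the quartic $f$."

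First I would eliminate the weight-$2$ coordinate $t$ by the substitution suggested in the Remark after Proposition~\ref{Xn-properties}, namely $t \longleftrightarrow \beta t$ (here $\beta = v$): this identifies $\X_1$ birationally with the hypersurface $u v^{-1}$-rescaled model, and more usefully lets me recognize the total space as a conic/double-cover bundle. Concretely, over the open locus where $uv\neq 0$, the equation $uv\,t^2 + f = 0$ exhibits the fibre as the double cover of $\PP^2_{x:y:z}$ branched along $\{f=0\}$, i.e. the $\dP_2$ with its canonical $\psl$-action; the nontrivial content is how the branch divisor degenerates over $u=0$ and $v=0$. Second, I would blow up / blow down along the two special fibres (the $S_i,S_j'$ of the Lemma, here one of each since $n=1$) following the diagram $\X'_1 \leftarrow \tilde\X_1 \rightarrow \X_1$ already in the paper, to pass to the smoother birational model where the $\frac12(1,1,1)$ points are resolved; then the map to $\PP^1$ becomes a genuine $\dP_2$-fibration whose generic fibre carries the $\psl$-action with the invariant quartic. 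Third --- the crucial step --- I would contract the $\psl$-invariant curves on the $\dP_2$ fibres: on a del Pezzo surface of degree $2$ realized as a double cover of $\PP^2$ branched over the Klein quartic, the anticanonical (or a suitable) $\psl$-invariant linear system contracts the ramification, mapping each fibre $\psl$-equivariantly to $\PP^2$. Doing this in families over $\PP^1$, and checking that the resulting total space has a $\PP^2$-bundle (or $\PP^1\times\PP^2$) structure, produces the desired $\psl$-equivariant birational map to $\PP^2\times\PP^1$. I would verify the final identification using the toric picture: after the contraction the Cox ring should reduce to that of $\PP^1\times\PP^2$ with the standard $\psl\subset\PGL_3$ action on the $\PP^2$ factor.

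The main obstacle I expect is the third step: ensuring that the fibrewise contraction of the $\psl$-invariant ramification curve genuinely globalizes over $\PP^1$ to a morphism (not merely a birational map fibre by fibre) and that the degenerate fibres $S_i$ (the non-reduced $\PP^2$'s) and $S_j'$ (cones over the Klein quartic) are handled so that no new $\psl$-invariant singularities or non-$\PP^2$ fibres survive. In particular one must check that after the contraction the special fibre over $a_1\in Z_\beta$ --- where the cone over $\{f=0\}$ sits --- flattens to an honest $\PP^2$, which amounts to a local computation at the point $P_1$ of type $\frac12(1,1,1)$ combined with the fact that the cone over the Klein quartic is itself dominated $\psl$-equivariantly by $\PP^2$. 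I would also need to confirm that $\H^1(\psl,\Pic)$-type obstructions do not interfere, but since $\Pic(\PP^2\times\PP^1)$ and $\Pic(\X_1)$ both have trivial $\psl$-cohomology (the lattice is $\Z^2$ with trivial action on the relevant part), no obstruction arises at that level; the content is entirely geometric, in producing the explicit contraction.
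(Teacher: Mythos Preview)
Your crucial third step contains a genuine error. You propose to use ``the anticanonical (or a suitable) $\psl$-invariant linear system'' on each $\dP_2$ fibre to map it $\psl$-equivariantly to $\PP^2$, and then conclude that the total space is birational to a $\PP^2$-bundle over $\PP^1$. But the anticanonical map $\dP_2\to\PP^2$ is the double cover --- it has degree $2$, not $1$ --- so the resulting map of total spaces is generically $2{:}1$, not birational. There is no repair for this: Section~\ref{stable-conj} of the paper records that $\dP_2$ and $\PP^2$ are each $\psl$-birationally rigid and hence \emph{not} $\psl$-equivariantly birational to one another (this is precisely the non-conjugacy of the two embeddings of $\psl$ in $\Cr_2(\C)$). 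Consequently any argument that stays fibred over $\PP^1$ and tries to convert the generic $\dP_2$ fibre into a $\PP^2$ fibre by a $\psl$-equivariant birational move is doomed from the outset. Your first two steps only shuttle between $\X_1$ and $\X_1'$ and do not make progress toward $\PP^2\times\PP^1$.

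The paper's proof escapes this obstruction by abandoning the fibration over $\PP^1$ altogether. Because $n=1$, the defining equation of $\X_1'\subset\PP^1\times\PP(1,1,1,2)$ is \emph{linear} in $(u{:}v)$, so the second projection $\X_1'\to\PP(1,1,1,2)$ is birational and $\psl$-equivariant. Resolving the vertex of $\PP(1,1,1,2)$ gives the $\PP^1$-bundle $\PPP_2=\PP_{\PP^2}(\O\oplus\O(2))$ over $\PP^2$, and the remaining task is to untwist $\PPP_2$ to $\PPP_0\cong\PP^1\times\PP^2$ by elementary transformations centred on $\psl$-invariant plane curves lying in a section: the Klein quartic takes $\PPP_2\dashrightarrow\PPP_6$, and its degree-$6$ Hessian takes $\PPP_6\dashrightarrow\PPP_0$. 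The key idea you are missing is this switch of projection, which is available only for $n=1$.
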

\begin{proof}
It is immediate to see that $\X_1'$ (and thus also $\X_1$)
is $\psl$-equivariantly birational to $\PP(1,1,1,2)$, by projection.
I now explain how to get from $\PP(1,1,1,2)$ equivariantly to $\PP^1\times\PP^2$.

By blowing up the singular point of $\PP(1,1,1,2)$,
we obtain a $\psl$-equivariant birational 
morphism 
$$\PP_{\PP^2}\big(\O_{\PP^2}\oplus\O_{\PP^2}(2)\big)\to\PP(1,1,1,2).$$

Note that for any $n\in\Z$ there is an action of $\psl$ on the 
projectivization 
$$\PPP_n\cong \PP_{\PP^2}\big(\O_{\PP^2}\oplus\O_{\PP^2}(n)\big)$$
arising from the action of $\psl$ on the base $\PP^2$.

Let use define a typical fibrewise transform on $\PPP_n$. Suppose that $\Sigma_0$ is a $\psl$-invariant section of the projection $\pi_n\colon\PPP_n\to\PP^2$. Let $C\subset\Sigma_0\cong\PP^2$ be a smooth $\psl$-invariant 
curve of degree $d$, defined by $\{g=0\}$. By blowing up $\PPP$ along $C$ one has a diagram of $\psl$-equivariant morphisms
$$
\xymatrix{
&\PPP_n^0\ar@{->}[dr]^{\beta_0'}\ar@{->}[ld]_{\beta_0}&\\%
\PPP_n&&\PPP_{n+d}}
$$
where $\beta_0$ is a blow up of the curve $C$, and $\beta_0'$ 
is a contraction of the strict transform of the divisor 
$\pi_n^{-1}\big(\pi_n(C)\big)$. This map can be seen in coordinates as
\[(x:y:z;a:b)\longmapsto (x:y:z;a:gb),\]
where $x:y:z$ are the coordinates on the $\PP^2$ and $a:b$ the coordinates on the fibre. Clearly this map is $\psl$-equivariant, and shows that, given an invariant curve $C:\{g=0\}$ of degree $d$, one can birationally move between $\PPP_n$ and $\PPP_{n-d}$.

Using this, in our situation one can use the Klein quartic curve $C$ to show that $\PP_2$ is $\psl$-equivariantly birational to $\PPP_6$. On the other hand, we showed earlier that $\X_1$ is $\psl$-equivariantly birational to $\PPP_2$.

Now, let $\Sigma_{\infty}\subset\PPP_6$ be a $\psl$-equivariant sections of $\pi_6$, and 
let $C_6\subset\Sigma_{\infty}$ be the Hessian curve of the Klein
quartic. Then $C_6$ is a $\psl$-invariant curve of degree $6$,
hence provides a $\psl$-equivariant
birational map $\PPP_6\dasharrow\PPP_0$.

Combining the birational maps obtained 
above, we get a sequence of $\psl$-equivariant
birational maps
$$\X_1\dasharrow\X_1'\dasharrow\PP(1,1,1,2)\dasharrow\PPP_2\dasharrow
\PPP_6\dasharrow\PPP_0\cong\PP^1\times\PP^2.$$
\end{proof}

Inspired by the work of Grinenko \cite{grinenko} a natural expectation in birational geometry arises: a 3-fold del Pezzo fibration $X$ of degree $2$ (or $1$) with only quotient singularities is birationally rigid if and only if $-K_X\notin\Int\Mob(X)$. For a discussion on this, and a counterexample in case we allow other singularities, I refer to \cite{singular-dP} and the references therein. This expectation translates to the following conjecture in our case.

\begin{conj}\label{conjecture:PSL}
The varieties $\X_n$ are birationally rigid, and in particular irrational, for $n\ge 2$.
\end{conj}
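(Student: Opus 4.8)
The natural approach is the Noether--Fano / maximal singularities method, in the form adapted to del Pezzo fibrations of degree $2$ with quotient singularities --- i.e.\ the ``singular Grinenko criterion'' discussed just before the conjecture (see \cite{singular-dP}, and \cite{grinenko, pukhlikov} for the smooth case). I will indicate how the three ingredients --- the $K$-condition, the generic fibre, and the special fibres --- should fit together, and where the real work lies.

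\textbf{Step 1 (the $K$-condition).} Using the toric presentation of $\X_n$ from the proof of Proposition~\ref{Xn-properties}, compute the Mori chamber decomposition of $\X_n$. Since $\rho(\X_n)=2$, the plane $N^1(\X_n)_\R$ is divided by a single wall, and adjunction on the ambient toric variety gives
\[-K_{\X_n}\sim M+(2-n)F,\]
where $F=\pi^*\O_{\PP^1}(1)$ is the fibre class and $M$ is the class restricting to $-K_S$ on a general fibre $S\cong\dP_2$. As $|{-}K_S|$ is base-point-free, $\R_{\ge0}M$ is the ``horizontal'' boundary ray of $\Mob(\X_n)$, so $-K_{\X_n}\in\Int\Mob(\X_n)$ exactly for $n\le 1$; thus the $K$-condition $-K_{\X_n}\notin\Int\Mob(\X_n)$ holds precisely for $n\ge 2$, in harmony with the rationality of $\X_0\cong\dP_2\times\PP^1$ and $\X_1$. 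For $n=2$ one has $-K_{\X_2}\sim M$, so $-K_{\X_2}$ lies \emph{on} the wall; this borderline value should be the delicate one.

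\textbf{Step 2 (trichotomy).} Suppose $\chi\colon\X_n\dashrightarrow Y$ is a non-square birational map to a Mori fibre space. Pulling back a very ample movable system gives $\MM\sim -rK_{\X_n}+lF$ on $\X_n$; if $(\X_n,\tfrac1r\MM)$ were canonical, Noether--Fano together with Step~1 would force $\chi$ to be square, so $(\X_n,\tfrac1r\MM)$ carries a maximal singularity with centre $Z$. Distinguish: (a) $Z$ horizontal ($\pi(Z)=\PP^1$); (b) $Z$ vertical (inside one fibre); (c) $Z$ one of the $2n$ points $P_j,Q_i$ of type $\tfrac12(1,1,1)$.

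\textbf{Step 3 (exclusion).} (a) Restrict to the generic fibre: $S_\eta\subset\PP(1,1,1,2)_{\C(s)}$ is the double cover of $\PP^2_{\C(s)}$ branched over the Klein quartic $\{f=0\}$ and twisted by $\alpha\beta\in\C(s)^{*}$, a non-square of degree $2n$ for $n\ge1$. Over each of the $28$ bitangents the two $(-1)$-curves of $S_\eta$ become conjugate over $\C(s)(\sqrt{\alpha\beta})$, so the nontrivial element of this quadratic extension acts on $\Pic(\overline{S_\eta})$ exactly as the Geiser involution; hence $\Pic(\overline{S_\eta})^{\mathrm{Gal}}=\Z K_{S_\eta}$, i.e.\ $S_\eta$ has Picard rank $1$ over $\C(s)$, and is therefore birationally rigid over $\C(s)$ (classical for del Pezzo surfaces of Picard rank one; see \cite{pukhlikov}). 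A horizontal maximal singularity of $(\X_n,\tfrac1r\MM)$ restricts to a non-canonical pair $(S_\eta,\tfrac1r\MM_\eta)$ with $\MM_\eta$ movable in $|{-}rK_{S_\eta}|$, contradicting rigidity; in particular $S_\eta$ is not $\C(s)$-rational. (b) A vertical maximal singularity over $p\in\PP^1$ produces, via the degree estimate on $\X_n$ for degree-$2$ del Pezzo fibrations (comparing $\MM^2\cdot F$ with the multiplicity along $Z$), a relation between $r$ and $l$ incompatible with the $K$-condition of Step~1; here one must treat the cone-fibres $S'_i$ and, for $n=2$, use the sharp form of the estimate. (c) At a point of type $\tfrac12(1,1,1)$ the Kawamata blow-up has discrepancy $\tfrac12$; the standard exclusion of such low-discrepancy centres (cf.\ \cite{pukhlikov}) bounds the multiplicity of $\MM$ there. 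Combining (a)--(c) no maximal singularity survives, so $\chi$ is square and $\X_n$ is birationally rigid; as $\PP^3$ is a Mori fibre space over a point it is not square-birational to $\X_n/\PP^1$, so $\X_n$ is irrational.

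\textbf{Main obstacle.} The heart of the proof is case (b): establishing the singular analogue of Grinenko's criterion sharply enough to cover $\X_n$. Because the fibration is isotrivial (the branch quartic is the fixed Klein quartic) and carries the symmetry $\psl$, one cannot appeal to ``general position'' of the fibres, and one must rule out maximal singularities hidden in the cone-fibres $S'_i$ or near the quotient points by hand; moreover the borderline value $n=2$ pushes one to exactly the regime where rigidity of del Pezzo fibrations is known to be subtle. A secondary nontrivial point is the Picard-rank-one computation of Step~3(a) for the twisted Klein double cover.
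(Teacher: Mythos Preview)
The statement you are attempting to prove is a \emph{conjecture} in the paper, not a theorem: it is labeled \texttt{Conjecture~\ref{conjecture:PSL}} and the paper offers no proof, only the heuristic that $-K_{\X_n}\notin\Int\Mob(\X_n)$ should imply birational rigidity for degree-$2$ del Pezzo fibrations with quotient singularities, citing \cite{grinenko} and \cite{singular-dP}. There is therefore nothing in the paper to compare your argument against.

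As for the proposal itself: it is a sensible outline of the standard Noether--Fano/maximal singularities strategy, and your Step~1 correctly identifies the $K$-condition threshold at $n=2$, which is exactly the motivation the paper gives for the conjecture. But you yourself flag that Steps~3(b) and~3(c) are where the actual content lies, and you do not carry them out --- you invoke ``the degree estimate'' and ``the standard exclusion'' without verifying that the existing literature covers your situation. In fact it does not: the results in \cite{grinenko} and \cite{pukhlikov} are for smooth or Gorenstein fibrations, and \cite{singular-dP} explicitly warns that the expected criterion can fail once one allows worse singularities. Your $\X_n$ has $2n$ non-Gorenstein $\tfrac12(1,1,1)$ points sitting in special fibres, and the exclusion of vertical maximal centres in those fibres (your case (b), especially at the cone-fibres $S'_i$ and near the quotient points) is precisely the missing piece that keeps this a conjecture rather than a theorem. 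The borderline case $n=2$, where $-K_{\X_2}$ lies on the wall, is particularly delicate, as you note. So what you have written is an accurate roadmap, but not a proof; the ``Main obstacle'' paragraph is the honest summary of where things stand.
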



\begin{thebibliography}{10}

\bibitem{singular-dP}
Hamid Ahmadinezhad, \emph{Singular del {P}ezzo fibrations and birational
  rigidity}, Automorphisms in Birational and Affine Geometry \textbf{79}
  (2014), 3--15.

\bibitem{beauv}
Arnaud Beauville, \emph{{$p$}-elementary subgroups of the {C}remona group}, J.
  Algebra \textbf{314} (2007), no.~2, 553--564.

\bibitem{belousov}
Grigory Belousov, \emph{Log del {P}ezzo surfaces with simple automorphism
  groups}, Proc. Edinb. Math. Soc. (2) \textbf{58} (2015), no.~1, 33--52.

\bibitem{bogo-prokh}
Fedor Bogomolov and Yuri Prokhorov, \emph{On stable conjugacy of finite
  subgroups of the plane {C}remona group, {I}}, Cent. Eur. J. Math. \textbf{11}
  (2013), no.~12, 2099--2105.

\bibitem{cheltsov-ineq}
Ivan Cheltsov, \emph{Two local inequalities}, Izv. Ross. Akad. Nauk Ser. Mat.
  \textbf{78} (2014), no.~2, 167--224.

\bibitem{three-emb}
Ivan Cheltsov and Constantin Shramov, \emph{Three embeddings of the {K}lein
  simple group into the {C}remona group of rank three}, Transform. Groups
  \textbf{17} (2012), no.~2, 303--350.

\bibitem{five-emb}
\bysame, \emph{Five embeddings of one simple group}, Trans. Amer. Math. Soc.
  \textbf{366} (2014), no.~3, 1289--1331.

\bibitem{grinenko}
M.~Grinenko, \emph{On the birational rigidity of some pencils of del {P}ezzo
  surfaces}, J. Math. Sci. (New York) \textbf{102} (2000), no.~2, 3933--3937,
  Algebraic geometry, 10. \MR{1794170 (2001j:14012)}

\bibitem{prokh}
Yuri Prokhorov, \emph{Simple finite subgroups of the {C}remona group of rank
  3}, J. Algebraic Geom. \textbf{21} (2012), no.~3, 563--600.

\bibitem{yuri}
\bysame, \emph{On stable conjugacy of finite subgroups of the plane {C}remona
  group, {II}}, Michigan Mathematical Journal, to appear (2015), 24 pp.

\bibitem{pukhlikov}
Aleksandr Pukhlikov, \emph{Birationally rigid varieties}, Mathematical Surveys
  and Monographs, vol. 190, American Mathematical Society, Providence, RI,
  2013.

\bibitem{serre}
Jean-Pierre Serre, \emph{A {M}inkowski-style bound for the orders of the finite
  subgroups of the {C}remona group of rank 2 over an arbitrary field}, Mosc.
  Math. J. \textbf{9} (2009), no.~1, 193--208, back matter.

\end{thebibliography}

\vspace{0.2cm}

School of Mathematics, University of Bristol, Bristol, BS8 1TW, UK

e-mail: \url{h.ahmadinezhad@bristol.ac.uk}

\end{document}